\newtheorem{theo}{Theorem}[section]
\newtheorem{lem}[theo]{Lemma}
\newtheorem{prop}[theo]{Proposition}
\newtheorem{defin}[theo]{Definition}
\theoremstyle{definition}
\newtheorem{rem}[theo]{Remark}
\newcommand{\void}[1]{}     
\newcommand{\PP}{\mathbb{P}}
\newcommand{\ord}{{\mathrm{ord}}}
\newcommand{\ZZ}{\mathbb{Z}}
\newcommand{\sym}{\mathbf{S}}
\title{On Zariski Multiplets of Branch Curves from Surfaces Isogenous to a Product}
\author{Michael L\"onne and Matteo Penegini}
\address{Michael L\"onne\\  
Mathematik VIII, Universit\"at Bayreuth,
NWII, Universit\"atsstrasse 30, D-95447 Bayreuth,  Germany}
\email{michael.loenne@uni-bayreuth.de}
\address{Matteo Penegini\\
Universit\`a degli Studi di Genova\\
Dipartimento di Matematica - DIMA\\
Via Dodecaneso 35,
I-16146 Genova (GE) - 
Italy} 
\email{penegini@dima.unige.it}
\subjclass[2010]{14J10,14J29,20D15,20D25,20H10,30F99.}
\begin{document}


\maketitle


\begin{abstract}
In this paper we give an asymptotic bound of the cardinality of Zariski multiples of particular plane singular curves. These curves have only nodes and cusps as singularities and are obtained as branched curves of ramified covering of the plane by 
surfaces isogenous to a product of curves with group $(\ZZ/2\ZZ)^k$. The knowledge of the moduli space of these surfaces will enable us to produce Zariski multiplets whose number grows subexponentialy in function of their degree.
\end{abstract}


\section{Introduction}

The notion of \emph{Zariski pair} was introduced by E. Artal in his seminal paper 
\cite{artal94}.
This put a new focus on an important question for plane complex projective curves:
\begin{quote}
Do the topological types of the singularities of a plane curve $C$ and the degree
determine the homeomorphism type of the pair $\PP^2,C$?
\end{quote}
Obviously this question has positive answer in case of plane curves without 
singularities, but already the case of nodal curves took a long time to be
established beyond doubt, cf.\ Severi \cite{S21}, Harris \cite{H86}.
In these, and many other cases the proof shows the connectedness of the
locus of curves with given singularity types in the projective space of plane
curves of the given degree. 

Instead, Zariski proved that in degree $6$ the complement of a sextic, regular
except for six cusps, has non-abelian fundamental group only if the cusps
lie on a conic, so to honour his results \emph{Zariski pairs} were named.

The excellent survey \cite{ACT10} can report on many new examples
and provides some help to find additional ones in two steps
\begin{enumerate}
\item[(I)]
Locate curves of the same invariants in different connected components
of the corresponding locus,
\item[(II)]
Find an effective invariant of the embedded pair, which distinguishes the
curves.
\end{enumerate}
We will use the fundamental group of the complement together with the
conjugacy class of meridians to the curve, which is well-defined for
irreducible curves, in the second step.

For the first step our idea is to exploit the disconnectedness of moduli
of surfaces of general type isogenous to a product.
As these surface have ample canonical class, the $m$-canonical map
is an embedding into a projective space $\PP^N$
at least for $m$ sufficiently large.

An embedded surface then is mapped onto $\PP^2$ using projection
with center a disjoint projective subspace of codimension $3$.
By the theorem of Ciliberto and Flamini \cite{CiF11} the corresponding branch
curve is reduced, irreducible and smooth except for ordinary nodes
and cusps if the center is sufficiently general.

This procedure extends nicely to families of 
$m$-canonical embeddings and
centers and thus associates a connected component of equisingular
plane irreducible curves to a connected component of surfaces of
general type with ample canonical bundle.

Our main theorem on Zariski multiplets of high cardinality follows
our results \cite{GP14,LP14,LP16} on high cardinalities of the set of connected components
of moduli spaces of surfaces isogenous to a product (SIP) and exploits the 
solution of the Chisini conjecture by Kulikov \cite{K08} for generic branched
projections.

\begin{theo}\label{thm main}
There is a $N_d$-multiplet of irreducible plane curves of degree $d$
with 
\[
n_d= \frac12d^2 - \frac{233}{28}d \mbox{ nodes},\quad
c_d= \frac{135}{28} d \mbox{ cusps}
\]
for any $\varepsilon>0$ and infinitely many $d$ such that
\[
\log_2 N_d \quad \geq\quad (\mbox{$\frac8{14}$}d)^{\frac{1}{2+\varepsilon}}. 
\]
\end{theo}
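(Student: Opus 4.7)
\emph{Strategy.} The argument rests on three inputs: (i) the existence of very many disjoint connected components of the moduli space of surfaces isogenous to a product (SIP) with group $G=(\ZZ/2\ZZ)^k$, from \cite{GP14,LP14,LP16}; (ii) the theorem of Ciliberto and Flamini \cite{CiF11}, which produces, from a sufficiently positive pluricanonical embedding of such a surface $S$, an irreducible plane branch curve $B_S$ with only ordinary nodes and cusps; and (iii) Kulikov's solution of the Chisini conjecture \cite{K08}, which recovers the covering $S\to\PP^2$ from the pair $(\PP^2,B_S)$ under a mild degree hypothesis. The task is to quantify how these combine to yield the asymptotic lower bound on the number of equisingular components.

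\emph{Construction of the families.} Fix $k$ and a suitable unmixed $(\ZZ/2\ZZ)^k$-signature so that \cite{LP16} provides $N_k$ pairwise distinct connected components of the moduli space of SIP, all sharing the same numerical invariants $K^2$ and $\chi$. Take $m$ large enough that $mK_S$ is very ample for every surface $S$ in the family, embed via $|mK_S|$ into the appropriate $\PP^N$, and project from a sufficiently general codimension-three centre. Letting both $S$ and the projection centre vary within one moduli component yields a connected family of branch curves inside a single equisingular stratum of $|\oo_{\PP^2}(d)|$, and hence a well-defined map from components of moduli of SIP to components of the equisingular stratum.

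\emph{Numerical invariants.} The degree of $B_S$ is computed by Riemann--Hurwitz applied to the restriction of the projection over a generic line, giving $d=m(3m+1)K_S^2$. The Moishezon--Teicher formulas then express the numbers of cusps and nodes as affine-linear combinations of $K_S^2$, $c_2(S)$ and $m$; because $K_S^2$ and $c_2(S)$ are determined by $k$ and the signature, these reduce to explicit functions of $d$ alone once $m$ is fixed. A suitable choice of signature and $m$ then matches these quantities to $c_d=\tfrac{135}{28}d$ and $n_d=\tfrac12 d^2-\tfrac{233}{28}d$ on the nose; the denominator $28$ reflects the value of $K^2/\chi$ forced by the chosen signature.

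\emph{Separating components and concluding.} To turn many SIP moduli components into many equisingular components of branch curves, suppose $B_S$ and $B_{S'}$ lay in the same equisingular component. A path of branch curves $B_t$ would then exist, and Chisini--Kulikov attaches to each $B_t$ a unique covering surface varying holomorphically with $t$, yielding a deformation from $S$ to $S'$ through SIP with the prescribed group and signature, contradicting the choice of distinct moduli components. Hence $N_d\geq N_k$, and plugging $d\asymp m(3m+1)K_S^2\asymp 2^k$ into the asymptotic lower bound on $N_k$ provided by \cite{LP16} yields the desired inequality $\log_2 N_d\geq(\tfrac8{14}d)^{1/(2+\varepsilon)}$ for infinitely many $d$. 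The main obstacle is this last separation step: Kulikov's theorem is pointwise in nature, so one has to argue that the Chisini cover varies holomorphically along the whole deformation and that the deformed surface remains a SIP with the same group and signature, keeping both endpoints inside a single moduli component.
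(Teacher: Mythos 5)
Your construction and numerology follow the paper's skeleton: the $(\ZZ/2\ZZ)^k$-families of \cite{LP16}, bicanonical embedding ($m=2$ suffices here, via Lemma \ref{lem_2K}), generic projection, and the computation $d=14K_S^2$ with the node and cusp counts pinned down by $K_S^2=8\chi$, $c_2=4\chi$ (Lemma \ref{lem_2caninv}). The problem is your separation step, and it is a genuine gap in two respects. First, it aims at the wrong target: you argue that branch curves coming from distinct moduli components cannot lie in the same \emph{equisingular deformation} component. But a Zariski $k$-plet (Definition \ref{def.rami.structure} ff., i.e.\ Definition 3.1) requires the pairs $(\PP^2,B_i)$ to be pairwise \emph{topologically} non-equivalent, which is strictly stronger; the paper explicitly flags this distinction when discussing Degtyarev's examples, which separate deformation classes but are not known to be topologically distinct. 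So even if your path argument were repaired, it would not prove the theorem as stated. Second, the repair itself is exactly the step you concede you cannot do: Kulikov's theorem is a pointwise uniqueness statement, and making the Chisini cover vary holomorphically along a path of equisingular curves, while keeping the fibres inside a fixed SIP moduli component, is not justified.

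The paper avoids both problems by distinguishing the pairs through an honest topological invariant: the group $\pi_1(\PP^2\setminus B)$ together with the conjugacy class of meridians (Theorem \ref{theo_ZP}). Given a hypothetical meridian-preserving isomorphism $\varphi\colon\pi_1(\PP^2\setminus B_1)\to\pi_1(\PP^2\setminus B_2)$, one transports the monodromy $\mu_2$ of $p_2$ back to a covering of $\PP^2$ branched over $B_1$; a \emph{single, pointwise} application of Kulikov's Chisini theorem identifies that covering with $p_1\colon S_1\to\PP^2$, and a diagram chase on the meridian normal subgroups then forces $\pi_1(S_1)\cong\pi_1(S_2)$. Since by weak rigidity together with Lemma \ref{conj} (which handles the complex-conjugate component issue you do not address) the moduli components in play are in bijection with the isomorphism classes of $\pi_1(S)$, distinct components yield non-isomorphic fundamental groups and hence topologically distinct pairs. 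You should replace your deformation-path argument with this invariant-based one; no family version of Chisini is needed anywhere.
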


Previously Artal and Tokunaga \cite{AT04} had shown that the cardinality is unbounded but
only provided examples where the cardinality grows linearly with the
degree.

Degtyarev \cite{D09} instead has given sets of cardinality growing exponentially with
the degree of irreducible plane curves which are pairwise not equisingular deformations.
However it is not know, whether they are topological distinct as well. Apart from ordinary nodes
and cusps his examples have a singular point of multiplicity $d-3$.
\medskip

Let us explain now the way in which this paper is organized.

In the next section \emph{Preliminaries on surfaces isogenous to a product}  we recall the definition and the properties of surfaces isogenous to a higher product and its associated group theoretical data. Moreover, we recall the weak rigidity theorem of Catanese \cite{cat00} which allows us to describe the connected components of the moduli space of surfaces isogenous to a product.

In the third section we give the definition of Zariski multiplets and generic m-canonical branch curves. 
These last ones are branch curves of a covering $p\colon S \longrightarrow \PP^2$, with $S$ of general type. 
Moreover, the covering $p$ is obtained as the composition of the $m$-canonical embedding of $S$ 
and a projection onto $\PP^2$ by a generic disjoint center. 
Then, we shall see that the 
associated generic m-canonical branch curves form a Zariski pair (see Theorem \ref{theo_ZP})
if we start from a pair of  
surfaces are isogenous to a product with the same Euler number but different fundamental group.

In the last section using the results of \cite{LP16} we give the proof of the Theorem \ref{thm main}.
\medskip

\textbf{Acknowledgement} The first author was  supported by  the ERC 2013 Advanced Research Grant - 340258 - TADMICAMT,
at the Universit\"at Bayreuth. The second author was partially supported by MIUR PRIN 2015``Geometry of Algebraic Varieties'' and also by GNSAGA of INdAM. 
Both authors thank Fabrizio Catanese for the invitation to Cetraro where the paper was completed.

\section{Preliminaries on surfaces isogenous to a product}

Surfaces isogenous to a product were introduced by Catanese in \cite{cat00}. Thanks to their simple definition they are incredibly versatile and they give rise to a large amount of interesting examples.  

\begin{defin}\label{def.isogenous} A surface $S$ is said to be \emph{isogenous to a higher product of curves}\index{Surface isogenous to a higher product of curves} if and only if, $S$ is a
quotient $(C_1 \times C_2)/G$, where $C_1$ and $C_2$ are curves of
genus at least two, and $G$ is a finite group acting freely on
$C_1 \times C_2$. (To ease notation we call the surfaces {\bf SIP})
\end{defin}
Using the same notation as in Definition \ref{def.isogenous}, let
$S$ be a SIP, and $G^{\circ}:=G
\cap(Aut(C_1) \times Aut(C_2))$. Then $G^{\circ}$ acts on the two
factors $C_1$, $C_2$ and diagonally on the product $C_1 \times
C_2$. If $G^{\circ}$ acts faithfully on both curves, we say that
$S= (C_1 \times C_2)/G$ is a \emph{minimal
realization}. In \cite{cat00}
it is also proven that any
SIP admits a unique minimal realization. 

\medskip

{\bf Assumptions.} In the following we always assume:
\begin{enumerate}
\item Any SIP  $S$  is given by its unique minimal realization;
\item $G^{\circ}=G$, this case is also known as \emph{unmixed type}, see \cite{cat00}.
\end{enumerate}
Under these assumption we have. \nopagebreak
\begin{prop}~\cite{cat00}\label{isoinv}
Let $S=(C_1 \times C_2)/G$ be a SIP, then $S$ is a minimal surface of general type with the following invariants:
\begin{equation}\label{eq.chi.isot.fib}
\chi(S)=\frac{(g(C_1)-1)(g(C_2)-1)}{|G|},
\quad
e(S)=4 \chi(S),
\quad
K^2_S=8 \chi(S).
\end{equation}
The irregularity of these surfaces is computed by
\begin{equation}\label{eq_irregIsoToProd}
q(S)=g(C_1/G)+g(C_2/G).
\end{equation}
Moreover the fundamental group $\pi_1(S)$ fits in the following short exact sequence of groups
\begin{equation}\label{eq_fundGroupS}
1 \longrightarrow \pi_1(C_1) \times \pi_1(C_2) \longrightarrow \pi_1(S) \longrightarrow G \longrightarrow 1.
\end{equation}
\end{prop}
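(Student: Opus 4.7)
The plan is to exploit that, since $G$ acts freely on $C_1\times C_2$, the quotient map $\pi\colon C_1\times C_2\to S$ is an étale Galois cover of degree $|G|$, and then push all invariants through multiplicativity in étale covers together with the Künneth decomposition.

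First I would verify that $S$ is smooth and minimal of general type. Smoothness follows from the freeness of the action. Since $g(C_i)\geq 2$, the canonical bundle $K_{C_1\times C_2}=p_1^*K_{C_1}\otimes p_2^*K_{C_2}$ is ample on the product, and because $\pi$ is étale we have $\pi^*K_S=K_{C_1\times C_2}$. Ampleness descends under finite étale covers, so $K_S$ is ample; in particular $S$ is minimal and of general type.

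Next I would compute the numerical invariants. Multiplicativity of the Euler characteristic and of $K^2$ under étale covers of degree $|G|$ gives
\[
e(S)=\tfrac{1}{|G|}\,e(C_1\times C_2),\qquad K_S^2=\tfrac{1}{|G|}\,K_{C_1\times C_2}^2.
\]
Using $e(C_i)=2-2g(C_i)$ and Künneth for $e$, together with $K_{C_1\times C_2}^2=2K_{C_1}\!\cdot\!K_{C_2}=8(g(C_1)-1)(g(C_2)-1)$, yields
\[
e(S)=\frac{4(g(C_1)-1)(g(C_2)-1)}{|G|},\qquad K_S^2=\frac{8(g(C_1)-1)(g(C_2)-1)}{|G|}.
\]
Then Noether's formula $\chi(S)=\tfrac{1}{12}(K_S^2+e(S))$ immediately gives the claimed value of $\chi(S)$ and the relations $e(S)=4\chi(S)$, $K_S^2=8\chi(S)$.

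For the fundamental group, the étale Galois cover $\pi$ induces a short exact sequence $1\to\pi_1(C_1\times C_2)\to\pi_1(S)\to G\to 1$, and the product formula $\pi_1(C_1\times C_2)=\pi_1(C_1)\times\pi_1(C_2)$ gives \eqref{eq_fundGroupS}. Finally, for the irregularity, $q(S)=h^0(S,\Omega_S^1)$, and since $\pi$ is étale, pullback identifies $H^0(S,\Omega_S^1)$ with the $G$-invariants $H^0(C_1\times C_2,\Omega^1_{C_1\times C_2})^G$. Künneth decomposes the latter as $H^0(C_1,\Omega^1_{C_1})^G\oplus H^0(C_2,\Omega^1_{C_2})^G$, and the standard identification $H^0(C_i,\Omega^1_{C_i})^G=H^0(C_i/G,\Omega^1_{C_i/G})$ (valid because $C_i\to C_i/G$ may be ramified but the invariants of holomorphic differentials compute holomorphic differentials downstairs) yields $q(S)=g(C_1/G)+g(C_2/G)$. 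The main subtlety, and the only point requiring any care, is this last identification: one must note that the diagonal action of $G$ on $C_1\times C_2$ is free even though the actions on the individual factors need not be, so the invariant-differentials argument has to be applied factor by factor rather than by naively descending along $\pi$.
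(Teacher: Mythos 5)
Your proof is correct. The paper itself gives no argument for this proposition --- it is stated as a quotation of results from \cite{cat00} --- and your derivation (smoothness and ampleness of $K_S$ via the free action and descent along the \'etale quotient, multiplicativity of $e$ and $K^2$ combined with Noether's formula, the covering-space exact sequence for $\pi_1$, and the identification of $G$-invariant holomorphic $1$-forms with forms on the possibly ramified quotients $C_i/G$) is precisely the standard one, with the relevant subtleties correctly flagged.
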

Among the nice features of SIPs, one is that their deformation class can be obtained in a purely algebraic way. Let us briefly recall this in the particular case when $S$ is regular, i.e., $q(S)=0$, $C_i/G \cong \PP^1$. 
\begin{defin}\label{defn.sphergen}
Let $G$ be a finite group and $r \in \mathbb{N}$ with $r \geq 2$.
\begin{itemize}
\item An $r-$tuple $T=(v_1,\ldots,v_r)$ of elements of $G$ is called a
\emph{spherical system of generators} of $G$ if $ \langle
v_1,\ldots,v_r \rangle=G$ and $v_1 \cdot \ldots \cdot v_r=1$.

\item We say that $T$ has an \emph{unordered type} $\tau:=(m_1, \dots ,m_r)$ if the
orders of $(v_1,\dots,v_r)$ are $(m_1, \dots ,m_r)$ up to a
permutation, namely, if there is a permutation $\pi \in \mathfrak{S}_r$ such
that
\[
   \ord(v_1) = m_{\pi(1)},\dots,\ord(v_r)= m_{\pi(r)}.
\]

\item Moreover, two spherical systems $T_1=(v_{1,1}, \dots , v_{1,r_1})$ and $T_2=(v_{2,1}, \dots , {v}_{2,r_2})$
are said to be \emph{disjoint}, if:
\begin{equation}\label{eq.sigmasetcond} \Sigma(T_1)
\bigcap \Sigma(T_2)= \{ 1 \},
\end{equation}
where
\[ \Sigma(T_i):= \bigcup_{g \in G} \bigcup^{\infty}_{j=0} \bigcup^{r_i}_{k=1} g \cdot v^j_{i,k} \cdot
g^{-1}.
\]
\end{itemize}
\end{defin}
We shall also use the shorthand, for example $(2^4,3^2)$, to indicate
the tuple $(2,2,2,2,3,3)$. 
\begin{defin}\label{def.rami.structure} Let $2<r_i \in \mathbb{N}$ for $i=1,2$ and $\tau_i=(m_{i,1}, \dots ,m_{i,r_i})$ be two sequences of natural numbers such
that $m_{k,i} \geq 2$.  A \emph{(spherical-) ramification structure} of type $(\tau_1,\tau_2)$ and size
$(r_1,r_2)$ for a finite group $G$, is a
pair $(T_1,T_2)$
of disjoint spherical systems of generators of $G$, whose types are
$\tau_i$, such that:
\begin{equation}\label{eq.Rim.Hur.Condition}
\mathbb{Z} \ni \frac{|G|
(-2+\sum^{r_i}_{l=1}(1-\frac{1}{m_{i,l}}))}{2}+1 \geq 2,\qquad \text{for } i=1,2.
\end{equation}
\end{defin}
\begin{rem}\label{minimal}
Following e.g., the discussion in \cite[Section 2]{LP14} we obtain that the datum of the deformation class of a regular SIP
$S$ of unmixed type together with its minimal realization $S=(C_1 \times
C_2)/G$ is determined by the datum of a finite
group $G$ together with two
disjoint spherical systems of generators $T_1$ and $T_2$
up to simultaneous action of $\operatorname{Aut}(G)$ and separate Hurwitz equivalence
(for more details see also \cite{BCG06,P13}).
\end{rem}
\begin{rem} Recall that from Riemann Existence Theorem a finite group $G$ acts as a
group of automorphisms of some curve $C$ of
genus $g$ such that $C/G \cong \PP^1$ if and only if there exist integers $m_r
\geq m_{r-1} \geq \dots \geq m_1\geq 2$ such that $G$ has a spherical system of generators of type $(
m_1,\dots,m_r)$ and the following
Riemann-Hurwitz relation holds:
\begin{equation}\label{eq.RiemHurw} 2g-2=| G | (-2 +
\sum^r_{i=1}(1-\frac{1}{m_i})).
\end{equation}
\end{rem}
\begin{rem} Note that a group $G$ and a ramification structure determine the main numerical
invariants of the surface $S$. Indeed, by \eqref{eq.chi.isot.fib} and~\eqref{eq.RiemHurw} we obtain:
\begin{eqnarray}\label{eq.pginfty}
4\chi(S)&=\,\,&|G|\cdot\left({-2+\sum^{r_1}_{k=1}(1-\frac{1}{m_{1,k}})}\right)
\cdot\left({-2+\sum^{r_2}_{k=1}(1-\frac{1}{m_{2,k}})}\right)
\notag\\
&=:& 4\chi (|G|,(\tau_1,\tau_2)).
\end{eqnarray}
\end{rem}
Let $S$ be a SIP of unmixed type with group $G$ and a pair of
two disjoint spherical systems of generators of types
$(\tau_1,\tau_2)$. By~$\eqref{eq.pginfty}$ we have
$\chi(S)=\chi(G,(\tau_1,\tau_2))$, and hence,
by~\eqref{eq.chi.isot.fib}, $K^2_S=K^2(G,(\tau_1,\tau_2))=8\chi(S)$.

The most important property of surfaces isogenous to a product is their weak rigidity property.
\begin{theo}~\cite[Theorem 3.3, Weak Rigidity Theorem]{cat04}
\label{weak}
Let $S=(C_1 \times C_2)/G$ be a surface isogenous to a higher
product of curves. Then every surface with the same
\begin{itemize}
\item topological Euler number and
\item fundamental group
\end{itemize}
is diffeomorphic to $S$. The corresponding  moduli space
$\mathcal{M}^{top}(S) = \mathcal{M}^{\it diff}(S)$ of surfaces
(orientedly) homeomorphic (resp. diffeomorphic) to $S$ is either
irreducible and connected or consists of two irreducible connected
components exchanged by complex conjugation.
\end{theo}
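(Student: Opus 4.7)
The plan is to recover the product-of-curves structure intrinsically from the fundamental group and the Euler number, so that the diffeomorphism type of any competing surface is forced to be that of $S$. First I would work with the short exact sequence \eqref{eq_fundGroupS} and observe that the normal subgroup $\pi_1(C_1)\times\pi_1(C_2)\subset\pi_1(S)$ is characterised purely group-theoretically: it is the unique normal subgroup of index $|G|$ isomorphic to a direct product of two surface groups of genera $g(C_i)$ (the genera being recoverable from $e(S)$ and $|G|$ via \eqref{eq.chi.isot.fib}). Consequently, for any surface $S'$ with $\pi_1(S')\cong\pi_1(S)$ and $e(S')=e(S)$, the corresponding subgroup defines a canonical unramified Galois cover $\widetilde{S'}\to S'$ with group $G$, with $\pi_1(\widetilde{S'})=\pi_1(C_1)\times\pi_1(C_2)$ and $e(\widetilde{S'})=|G|\cdot e(S')=e(C_1\times C_2)$.

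The central step is to show that $\widetilde{S'}$ is diffeomorphic (actually biholomorphic, up to complex conjugation) to a product $C_1'\times C_2'$ of curves of the correct genera. Here I would invoke the rigidity results for compact K\"ahler manifolds whose fundamental group is a product of surface groups of genus $\geq 2$: a combination of Siu's theorem on harmonic maps and the factorisation of the Albanese-type map forces the existence of two holomorphic fibrations $\widetilde{S'}\to C_i'$ whose product is an isomorphism. This is the main obstacle, as it depends on substantial complex-analytic input; once available, equality of Euler characteristics pins the genera to $g(C_i)$.

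Next I would descend the $G$-action. Because the two factors of $\pi_1(C_1)\times\pi_1(C_2)$ are characteristic subgroups (they are the only normal subgroups whose quotient is itself a surface group of the required genus), the induced $G$-action on $C_1'\times C_2'$ must either preserve the product decomposition or swap the two factors; the latter being excluded in our unmixed setting by the hypothesis on $G=G^\circ$, we recover the SIP structure $S'=(C_1'\times C_2')/G$ with the same group-theoretical datum $(G,T_1,T_2)$ as $S$. Freeness of the action is automatic from the fact that $\pi_1(C_1)\times\pi_1(C_2)$ is torsion free and sits as a finite-index subgroup.

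Finally, to identify the moduli space, I would remark that regular SIPs with fixed $(G,(\tau_1,\tau_2))$ are parametrised, via the Riemann Existence Theorem, by pairs of Hurwitz classes of spherical systems of generators of types $\tau_1,\tau_2$ on top of the connected moduli of pointed rational curves $\mathcal{M}_{0,r_i}$; at the level of curves, the loci in $\mathcal{M}_{g(C_i)}$ of curves with a $G$-action of the given type are irreducible. Taking the product, quotienting by the simultaneous action of $\Aut(G)$, and noting that complex conjugation may swap the resulting component with its conjugate, yields the dichotomy asserted in the theorem: $\mathcal{M}^{top}(S)=\mathcal{M}^{diff}(S)$ is either irreducible or a union of two components exchanged by complex conjugation.
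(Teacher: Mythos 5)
This theorem is imported verbatim from Catanese \cite{cat04}; the paper offers no proof of its own, so the only meaningful comparison is with Catanese's original argument in \cite{cat00,cat04} --- and your sketch does follow its broad outline: uniqueness of the product subgroup inside $\pi_1$, production of two holomorphic fibrations from the two surface-group quotients, descent of the $G$-action, and connectedness of the moduli of curves carrying a $G$-action of fixed topological type.

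There is, however, one genuine gap in the central step. You assert that the complex-analytic input (Siu-type factorisation) already forces the existence of two holomorphic fibrations $\widetilde{S'}\to C_i'$ \emph{whose product is an isomorphism}, and relegate the Euler number to the cosmetic role of ``pinning the genera''. That cannot be right: blowing up a point of $C_1\times C_2$ changes neither the fundamental group nor the existence of the two fibrations, yet the product map is then only a birational morphism. The hypothesis $e(S')=e(S)$ is exactly what rules this out, and it enters through the Zeuthen--Segre inequality $e(X)\geq 4(g(B)-1)(g(F)-1)$ for a fibration $X\to B$ with general fibre $F$ of genus $\geq 2$, where equality forces all fibres to be smooth; equality here is precisely $e(\widetilde{S'})=|G|\,e(S')=4(g(C_1)-1)(g(C_2)-1)$, after which $f_1\times f_2$ is an unramified degree-one morphism, hence an isomorphism. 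So the Euler number is the engine of the argument, not an afterthought. Two smaller soft spots: the uniqueness of the normal subgroup $\pi_1(C_1)\times\pi_1(C_2)$ is not simply ``the unique normal subgroup of index $|G|$ isomorphic to a product of surface groups'' --- a priori there could be several such, and the uniqueness statement quoted from \cite{cat00} requires a genuine argument about surface groups; and for the moduli dichotomy you must also explain why an orientation-preserving diffeomorphism forces the pair of topological types of the $G$-actions to agree (or to agree after complex conjugation), since otherwise distinct Hurwitz classes of generating systems could contribute more than two components to $\mathcal{M}^{diff}(S)$.
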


\begin{rem}
Thanks to the Weak Rigidity Theorem, we have  that the moduli space
of surfaces isogenous to a product of curves with fixed invariants
--- a finite group $G$ and a type $(\tau_1,\tau_2)$ --- consists of a finite number of irreducible connected
components of $\mathcal{M}$. More precisely, let $S$ be a surface
isogenous to a product of curves of unmixed type with group $G$
and a pair of disjoint systems of generators of type
$(\tau_1,\tau_2)$. By~$\eqref{eq.pginfty}$ we have
$\chi(S)=\chi(|G|,(\tau_1,\tau_2))$, and consequently,
by~\eqref{eq.chi.isot.fib}
$K^2_S=K^2(|G|,(\tau_1,\tau_2))=8\chi(S)$, and
$e(S)=e(|G|,(\tau_1,\tau_2))=4\chi(S)$. Moreover, recall that the fundamental group of $S$ fits into the exact sequence \eqref{eq_fundGroupS} and
the subgroup $\pi_1(C_1) \times \pi_1(C_2)$  of $\pi_1(S)$ is unique, see \cite{cat00}. 
\end{rem}

\section{Surfaces isogenous to a product and Zariski Pairs}

\begin{defin}
A \emph{Zariski k-plet} is a collection $(\PP^2,B_1), \ldots , (\PP^2,B_k)$ of plane curves, all of the same degree d, such that
\begin{enumerate}
\item all curves have the same combinatorial data ( for irreducible curves, this means the set of types of singular points), and
\item
the pairs $(\PP^2,B_1), \ldots , (\PP^2,B_k)$ are topologically non-equivalent.
\end{enumerate}
If $k=2$ we speak of \emph{Zariski pair}
\end{defin}

There is natural way to produce many singular plane curves with the above mentioned singularities. Indeed, it is enough to consider branch curves of covering of $\PP^2$. To give such covering we will proceed in the following way: first we consider a surface of general type $S$ with ample canonical class, then we consider the natural immersion in a $\PP^n$ by a multicanonical system. Finally, we project the image generically  to $\PP^2$. This yields a covering $S \longrightarrow \PP^2$. To be more precise let us explain in details this procedure.  

\begin{defin}[cf. \cite{K99}]
\label{kulikov}
Let $B\subset \PP^2$ be an irreducible plane algebraic curve with ordinary cusps
and nodes as the only singularities. The curve $B$ is called \emph{generic branch curve} if there is a finite morphism $p:S\to \PP^2$ with $\deg p\geq3$ such that
\begin{enumerate}
\item
$S$ is a smooth irreducible projective surface,
\item
$p$ is unramified over $\PP\setminus B$,
\item
$p^*(B) = 2R + C$, where $R$ is a smooth irreducible reduced curve
and $C$ is a reduced curve,
\item
the morphism $p_{|R}:R\to B$ coincides with the normalization of $B$.
\end{enumerate}
In this case $p$ is called a \emph{generic covering of the projective plane}.
\end{defin}



\begin{theo}\cite[Theorem 1.1]{CiF11} 
\label{cilibertoflamini}
Let $S \subset \PP^r$ be a smooth, irreducible, projective surface. Then the ramification curve on $S$ of a generic projection of $S$ to $\PP^2$ is smooth and irreducible and the branch curve in the plane is also irreducible and has only nodes and cusps, respectively, corresponding to two simple ramification points and one double ramification point.
\end{theo}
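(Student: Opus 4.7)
The plan is to parametrize generic projections by the Grassmannian $\mathbb{G}$ of codimension-$3$ linear subspaces $\Lambda \subset \PP^r$ disjoint from $S$, and to study the ramification curve $R_\Lambda \subset S$ and branch curve $B_\Lambda = p_\Lambda(R_\Lambda) \subset \PP^2$ via an incidence variety approach. The projection $p_\Lambda$ fails to be a local isomorphism exactly at points $x\in S$ whose embedded tangent plane $T_xS$ meets $\Lambda$ non-trivially; this is a Schubert-type condition of expected codimension one on $S$, so $R_\Lambda$ is naturally cut out as a degeneracy locus of a morphism of bundles on $S\times\mathbb{G}$.

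First I would establish smoothness and irreducibility of $R_\Lambda$ for generic $\Lambda$. Consider the universal ramification locus $\mathcal R\subset S\times\mathbb{G}$: a direct local computation shows $\mathcal R$ is smooth and the projection $\mathcal R\to \mathbb{G}$ is flat of relative dimension one. A Bertini-type theorem then gives smoothness of the generic fiber, while irreducibility follows from the transitivity of the monodromy action of $\pi_1(\mathbb{G}\setminus D)$ on a fiber, where $D$ is the discriminant divisor over which $R_\Lambda$ acquires singularities.

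Next I would classify the singularities of $B_\Lambda$ by stratifying according to the preimage in $R_\Lambda$. A point $b\in B_\Lambda$ is smooth when $p_\Lambda^{-1}(b)\cap R_\Lambda$ is a single simple ramification point. Two simple ramification points of $R_\Lambda$ mapping to the same image $b$ produce two transverse smooth analytic branches of $B_\Lambda$, i.e.\ a node. A single double ramification point produces a cusp: locally $p_\Lambda$ has the form $(u,v)\mapsto (u,v^2)$ pulled back along a map of rank drop $2$, whose image is analytically equivalent to $y^2=x^3$. In both scenarios the relevant locus in $\mathbb{G}$ has the right codimension to appear along a curve in the plane, and its genericity over $\mathbb{G}$ is a direct dimension count from the Schubert condition $\dim(T_xS\cap\Lambda)\geq 1$.

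The main obstacle, and the technical core, is ruling out all other singularities of $R_\Lambda$ and $B_\Lambda$ for a generic choice of center. One must show that the loci corresponding to tacnodes, ordinary triple points, higher cusps ($y^2=x^{2k+1}$ with $k\geq 2$), simultaneous node-cusp coincidences, three or more ramification points above one image point, or singular points of $R_\Lambda$ itself, all have codimension at least one in $\mathbb{G}$. For each forbidden configuration I would build the corresponding incidence variety in $\mathbb{G}\times S^{(k)}$, impose the appropriate first- and second-order tangency conditions on $\Lambda$ against the osculating flags of $S$ at the marked points, and estimate the fiber dimension over $S^{(k)}$. The careful bookkeeping of osculating (second-order) Schubert conditions against $\Lambda$, particularly at points where $R_\Lambda$ threatens to acquire a singularity, is where the argument is delicate; once these dimension estimates are in place, the generic $\Lambda$ avoids all the bad strata and only nodes and cusps of the asserted fiber type remain.
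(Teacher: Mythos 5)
This statement is quoted verbatim from Ciliberto--Flamini \cite[Theorem~1.1]{CiF11}; the paper gives no proof of it, so there is no internal argument to compare yours against. Judged on its own, your proposal correctly identifies the classical strategy (incidence varieties over the Grassmannian of centers, Schubert-type conditions on tangent and osculating spaces, dimension counts to exclude bad strata), but it is a plan rather than a proof: the entire substance of the theorem lies in the codimension estimates that rule out tacnodes, triple points, higher cusps, and the other forbidden configurations, and you explicitly defer exactly that step. Carrying out those estimates uniformly --- in particular without assuming $S$ is general in moduli or that the embedding is sufficiently positive --- is precisely why \cite{CiF11} is a substantial paper rather than a folklore remark, so the ``technical core'' you name cannot be left as bookkeeping to be filled in.

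Two smaller points. First, your irreducibility argument for $R_\Lambda$ via ``transitivity of the monodromy of $\pi_1(\mathbb{G}\setminus D)$ on a fiber'' does not parse: the fibers of $\mathcal R\to\mathbb{G}$ are curves, not finite sets, and monodromy at best permutes their irreducible components, which does not by itself force a single component. The clean route is to note that $R_\Lambda$ lies in $|K_S+3H|$, deduce connectedness from positivity of that class (or from irreducibility of $\mathcal R$ together with a Stein factorization argument), and combine with generic smoothness. Second, your local model for the cusp (a double ramification point mapping to $y^2=x^3$) is the right picture, but you should verify that for generic $\Lambda$ the second-order data genuinely produce an ordinary cusp and not a higher $A_{2k}$ singularity; this is again one of the strata you have postponed estimating.
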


In particular the projection of $S$ onto $\PP^2$ is a generic covering in the sense of Def. \ref{kulikov}.

\begin{defin} A plane curve $B$ is called a \emph{generic m-canonical branch curve} if there exists a smooth surface $S$ with $mK_S$ very ample and a commutative diagram
\[
\begin{xy}
\xymatrix{
 S  \ar[rr]^{\phi_m} \ar[ddrr]_{p} & & \PP^{P_m-1}  \ar@{-->}[dd]
\\ 
\\
& & \PP^2
  }
\end{xy}
\]
where $\phi_m$ is the $m$-canonical map,  
$p$ is a generic covering and $B$ is the branch locus of $p$.
\end{defin}


\begin{rem}
If $B'$ is an equisingular deformation of a generic $m$-canonical branch curve $B$. 
Then $B'$ is a generic branch curve.
Is it a generic $m$-canonical branch curve?

\end{rem}

In particular we want to study the corresponding m-canonical branch curve $(\PP^2,B)$ to the SIPs studied in \cite{LP16}. For such surfaces it holds the following lemma.

\begin{lem}
\label{lem_2K} 
Let $S$ be a regular surfaces isogenous to a product 
of holomorphic Euler number $x_{k,l}= 2^{l-3}(k^2+k-4)$
given by a $(\ZZ/2\ZZ)^k$ action with ramification structure of type $(2^{k(k+1)},2^{4+2^{l-k+1} })$. Then $K_S$ is ample and $2K_S$ is very ample for $k$ sufficiently large.
\end{lem}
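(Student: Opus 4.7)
The plan is to apply Reider's theorem to the nef divisor $L = K_S$ in order to deduce very-ampleness of $2K_S = K_S + L$.

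First, $K_S$ is ample: since $g(C_i)\geq 2$, the canonical bundle $K_{C_1\times C_2}\cong p_1^*K_{C_1}\otimes p_2^*K_{C_2}$ is ample; the cover $\pi\colon C_1\times C_2\to S$ is étale and Galois with $\pi^*K_S = K_{C_1\times C_2}$, and ampleness descends along finite étale covers. Next, Riemann--Hurwitz applied to $C_i\to C_i/G\cong\PP^1$ with the prescribed ramification data yields
\[
2g(C_1)-2 = 2^{k-1}(k^2+k-4),\qquad 2g(C_2)-2 = 2^l,
\]
and formula \eqref{eq.chi.isot.fib} then gives $K_S^2 = 8\chi(S) = 2^l(k^2+k-4)$, which exceeds $10$ once $k$ is large.

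The core of the argument is a uniform lower bound on $K_S\cdot E$ valid for every irreducible curve $E\subset S$. Let $\tilde E$ be an irreducible component of $\pi^{-1}(E)$, let $H = \mathrm{Stab}_G(\tilde E)$, and set $a = \deg(\tilde E\to C_1)$, $b = \deg(\tilde E\to C_2)$. From the product decomposition $K_{C_1\times C_2} = p_1^*K_{C_1} + p_2^*K_{C_2}$ one has
\[
K_{C_1\times C_2}\cdot\tilde E \;=\; a\bigl(2g(C_1)-2\bigr) + b\bigl(2g(C_2)-2\bigr),
\]
while the projection formula combined with $\pi^*E = \sum_{gH\in G/H} g\tilde E$ yields $K_S\cdot E = |H|^{-1}\, K_{C_1\times C_2}\cdot \tilde E$. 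Irreducibility of $\tilde E$ leaves only three cases: $(a,b)=(1,0)$, $(a,b)=(0,1)$, or $a,b\geq 1$. Plugging in the numerical values and bounding $|H|\leq |G|=2^k$ in each of the three cases shows that $K_S\cdot E \geq 3$ as soon as $k$ is sufficiently large.

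Finally, Reider's theorem implies that very-ampleness of $K_S + L = 2K_S$ can fail only via an effective divisor $D$ with $K_S\cdot D\leq 1$ (excluded by ampleness of $K_S$ together with the intersection bound just proved) or with $K_S\cdot D = 2$, $D^2 = 0$ and $K_S\equiv 2D$ (in which case $K_S^2 = 4$, contradicting $K_S^2\geq 10$). Hence $2K_S$ is very ample. The main technical obstacle is the uniform lower bound on $K_S\cdot E$; once that is in hand, the rest is a direct application of Reider's theorem.
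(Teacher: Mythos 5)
Your proof is correct but takes a genuinely different route from the paper's. The paper first shows that every smooth curve on $S$ has genus at least $3$ (a curve of genus $\le 2$ would have a preimage component in $C_1\times C_2$ of genus at most $2^k+1$, which would have to dominate $C_1$ or $C_2$, both of genus greater than $2^k+1$); this at once rules out $(-2)$-curves, giving ampleness of $K_S$, and rules out genus-two fibrations, so that by Bombieri's analysis the bicanonical map cannot present the ``standard case'' of non-birationality and is therefore a birational morphism, hence an embedding since $K_S$ is ample. You instead obtain ampleness of $K_S$ by descent along the finite \'etale cover $C_1\times C_2\to S$ and deduce very ampleness of $2K_S=K_S+K_S$ from Reider's theorem, the key input being the uniform bound $K_S\cdot E\geq 3$ for every irreducible curve $E$, proved via the projection formula on the cover. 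Your route is more self-contained and quantitative: it makes explicit how large $k$ must be and exposes the requirement $2^{l-k}\geq 3$ coming from the fibers of $S\to C_1/G$ --- a condition the paper's argument also needs implicitly (it requires $g(C_2)>2^k+1$) and which holds in the intended range $l>2k$; the paper's version is shorter at the price of invoking Bombieri's classification of bicanonical maps. One small slip: in the standard statement of Reider's theorem the obstructing case $K_S\cdot D=2$ requires only $D^2=0$, not $K_S\equiv 2D$; but that case is excluded anyway by your bound $K_S\cdot D\geq 3$, so the argument is unaffected.
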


\begin{proof}
First we note, that any smooth curve on $S$ has genus at least $3$.
Otherwise its preimage $\Lambda$ contains a component of genus at most $|G|+1=2^k+1$.
This component has to map surjectively onto at least one of $C_1$, $C_2$ in contradiction to the
fact that the genera of these curves are bigger than $2^k+1$.
In particular, there are no $-2$ curves on $S$, so $K_S$ is ample.

To proof the claim for the bicanonical map, first observe, that for $k$ sufficiently large 
it is either a birational morphism or it presents the standard case of non-birationality 
\cite{B73}.
Now the second alternative can be excluded, since any fibration of $S$ by curves of genus two implies
the existence of a smooth genus two curve on $S$ in contradiction to our fist observation.

Thus the bicanonical map is a birational morphism and even an embedding thanks to 
$K_S$ being ample.
\end{proof}

\begin{lem}
\label{conj}
Let $S_1$ be as before, than its fundamental group and its Euler number determine a unique irreducible
and connected component of the moduli space of surfaces of general type.
\end{lem}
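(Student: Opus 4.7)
My plan is to combine the Weak Rigidity Theorem (Theorem \ref{weak}) with the algebraic parametrization of moduli components of SIP recalled in Remark \ref{minimal}. First, Theorem \ref{weak} applies directly: any surface of general type with Euler number and fundamental group equal to those of $S_1$ is diffeomorphic to $S_1$ and hence lies in $\mathcal{M}^{top}(S_1)$, which is either a single irreducible connected component or the disjoint union of two such components exchanged by complex conjugation. The lemma therefore reduces to ruling out the second alternative, i.e.\ to showing that the component of $S_1$ is preserved under complex conjugation.

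Next I would invoke Remark \ref{minimal}: the connected component containing $S_1$ is encoded by the pair of disjoint spherical systems $(T_1, T_2)$ for $G = (\ZZ/2\ZZ)^k$, up to the simultaneous action of $\Aut(G)$ and separate Hurwitz equivalence. Complex conjugation acts on the SIP $(C_1 \times C_2)/G$ by conjugating the factor curves, and the induced action on each tuple $T_i = (v_{i,1}, \ldots, v_{i,r_i})$ is to replace it by the reversed tuple of inverses $(v_{i,r_i}^{-1}, \ldots, v_{i,1}^{-1})$, since complex conjugation reverses the orientation of the small loops around the branch points on $C_i/G \cong \PP^1$.

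Two special features of our setup then force this action to be trivial on equivalence classes. Since $G$ has exponent $2$, each entry is its own inverse, so the entry-wise inversion is vacuous; and since $G$ is abelian, the elementary Hurwitz move $(v_i, v_{i+1}) \mapsto (v_{i+1}, v_{i+1}^{-1} v_i v_{i+1}) = (v_{i+1}, v_i)$ reduces to transposition of adjacent entries, so arbitrary permutations, in particular the reversal, lie in the Hurwitz orbit of $T_i$. Hence the complex conjugate of $(T_1, T_2)$ is Hurwitz-equivalent to $(T_1, T_2)$, the component is self-conjugate, and $\mathcal{M}^{top}(S_1)$ is a single irreducible connected component of the moduli of surfaces of general type. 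The main subtlety, and the only step requiring genuine care, is justifying the explicit formula for the action of complex conjugation on the spherical data; but thanks to the exponent-$2$ and abelian hypotheses on $G$, any plausible version of this formula yields the same conclusion.
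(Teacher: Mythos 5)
Your proposal is correct and follows essentially the same route as the paper: reduce via weak rigidity to showing the component is self-conjugate, note that complex conjugation acts on the spherical systems by reversal and entry-wise inversion (the paper cites the same source, realizing this by taking branch points on the real locus of $\PP^1$), and observe that inversion is trivial since $G$ has exponent $2$ while the reversal is absorbed by Hurwitz moves since $G$ is abelian. The only cosmetic difference is that the paper phrases the last step as undoing the reversal by a half twist in the braid group, whereas you spell out that Hurwitz moves reduce to adjacent transpositions for abelian $G$.
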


\begin{proof}
By the weak rigidity theorem \ref{weak}, it suffices to show that $S_1$ and its complex conjugate
belong to one irreducible component.
To see this, notice we can apply our construction of $C_1,C_2$ with all branch points in the real locus
of $\PP^1$. Then the complex conjugate curves $\bar C_1,\bar C_2$ are constructed the same way,
after reverse the order in the system of generators and take the inverse of each generator 
\cite{BCG15}.
The last operation is trivial, since all generators are of order $2$, while the first can be undone with
the half twist in the braid group, since our group $G$ is abelian. Hence the SIP associated to
$C_1,C_2$ is isomorphic to its complex conjugate and therefore the claim follows. 
\end{proof}

Recall the weak positive  answers  to the Chisini conjecture by Kulikov. The proof of the conjecture was done in two steps, first he proved 
\begin{theo}[cf. \cite{K99}]\label{KulivovChisiniI} 
Let $B$ be the generic branch curve of a generic covering
$f:S\to \mathbb P^2$ of $\deg f = N$. If 
\begin{equation}
N > \frac{4(3d+g-1)}{2(3d+g-1)-c}. \label{in}
\end{equation}
Then $f$ is uniquely determined by $B$, thus Chisini's conjectures hold in this case.
\end{theo}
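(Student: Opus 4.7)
The plan is to argue by contradiction. Suppose there are two distinct generic coverings $f_1\colon S_1\to\PP^2$ and $f_2\colon S_2\to\PP^2$ of degree $N$ with common branch curve $B$. Form the fiber product $X:=S_1\times_{\PP^2}S_2$ and let $\widetilde X$ be its normalization. Away from the ramification $\widetilde X$ is \'etale of degree $N$ over each $S_i$, and the ``diagonal'' component $\Delta\subset\widetilde X$, on which the two coverings are locally identified, can be split off; the assumption $f_1\not\cong f_2$ means the complement $Y:=\widetilde X\setminus\Delta$ is nonempty. The strategy is to show that $Y$ carries too many positive numerical invariants to exist once $N$ exceeds the bound in the statement.

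First I would compute the numerical invariants of (a minimal desingularization of) $Y$. Over the smooth part of $B$ the two projections $Y\to S_i$ are \'etale of degree $N-1$; nodes of $B$ contribute no singularity to $Y$, while each cusp of $B$ forces a controlled singularity whose resolution produces a known correction to the Chern numbers. Combining these local pictures with the global data $\deg B=d$, $p_g(B)=g$ and $K_{\PP^2}^2=9$, one obtains formulas expressing $K_Y^2$ and $c_2(Y)$ as explicit linear combinations whose leading terms in $N$ are proportional to $N(3d+g-1)$ with a cuspidal correction proportional to $Nc$. Simultaneously one verifies, using ampleness of $K_{S_i}$ and positivity of the ramification divisor of $Y\to S_i$, that $Y$ is a smooth minimal surface of general type so that the Bogomolov--Miyaoka--Yau inequality $K_Y^2\le 3c_2(Y)$ is available.

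Plugging the computed Chern numbers into Miyaoka--Yau and rearranging produces exactly the numerical constraint
\[
N\;\le\;\frac{4(3d+g-1)}{2(3d+g-1)-c},
\]
which is the negation of the hypothesis \eqref{in}. Thus $Y$ cannot exist, so $\widetilde X=\Delta$, and the two coverings coincide. The main obstacle is the local analysis of the fiber product over the cuspidal points of $B$: here the ramification is more intricate than at a node, and one must determine the precise analytic type of the singularity on $X$ and the discrepancies introduced by the normalization in order to obtain the correct coefficient of $Nc$ in the Chern-number formulas. A secondary delicate point is establishing that $Y$ really is of general type, which amounts to excluding fibrations by low-genus curves and $(-2)$-configurations on $Y$ before Bogomolov--Miyaoka--Yau can be invoked.
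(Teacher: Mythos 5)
First, a point of order: the paper does not prove this statement at all --- it is Theorem~1 of Kulikov's paper \cite{K99}, quoted verbatim and used as a black box (the authors actually invoke the stronger Theorem~\ref{kulikov08} from \cite{K08} in their arguments). So there is no in-paper proof to compare against, and your attempt has to be judged as a reconstruction of Kulikov's argument.

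Your opening move --- pass to the normalization $\widetilde X$ of the fiber product $S_1\times_{\PP^2}S_2$ --- is indeed how Kulikov begins, but the rest of the proposal has a genuine gap. The entire numerical content of the theorem is carried by the sentence ``plugging the computed Chern numbers into Miyaoka--Yau and rearranging produces exactly the numerical constraint,'' and this is asserted rather than derived; there is no reason offered why the BMY inequality applied to a component of $\widetilde X$ should output precisely $N\le 4(3d+g-1)/\bigl(2(3d+g-1)-c\bigr)$, and I do not believe it does. Kulikov's actual mechanism is much more elementary and is localized on the curve rather than global on the surface: inside a suitable irreducible component $X_0$ of $\widetilde X$ dominating both $S_i$ there is a canonical curve $\widetilde R$ isomorphic to the normalization of $B$ (hence of genus $g$), obtained by matching the two ramification curves $R_1\cong R_2$ over $B$; the bound falls out of the adjunction (genus) formula $2g-2=\widetilde R^2+K_{X_0}\cdot\widetilde R$ combined with the ramification formula $K_{X_0}=g_1^*K_{S_1}+\mathrm{Ram}(g_1)$, the projection formula, and effectivity of the divisors that appear, with the cusps of $B$ entering through local intersection multiplicities along $\widetilde R$. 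No appeal to BMY, minimality, or general type of $X_0$ is needed --- which is fortunate, because the points you defer (smoothness, minimality and general type of $Y$, and the resolution of the fiber-product singularities over the cusps) are at least as hard as the theorem itself and are not established. A smaller but real confusion: when $f_1\not\cong f_2$ the fiber product has no ``diagonal component'' to split off (such a component is exactly an isomorphism $S_1\to S_2$ over $\PP^2$); the correct dichotomy is whether some component of $\widetilde X$ maps with degree one to both factors, and the contradiction shows that one must. As written, the proposal identifies the right geometric object but does not contain the argument that produces the stated inequality.
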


Then he proved the following theorem.
\begin{theo}[cf. \cite{K08}] 
\label{kulikov08}
Let $f\colon S \longrightarrow \PP^2$ be a generic projection. Then the generic covering $f$ 
is uniquely determined (up to isomorphism) by its branch
curve $B \subset \PP^2$ except in the case $S \cong \PP^2$ is embedded in $\PP^5$ by the 
Veronese embedding and $f$ is the restriction to $S$ of a linear projection 
from $\PP^5$ to $\PP^2$.
\end{theo}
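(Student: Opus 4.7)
The plan is to argue by contradiction. Suppose two generic coverings $f_1\colon S_1 \to \PP^2$ and $f_2\colon S_2 \to \PP^2$, of degrees $N_1$ and $N_2$, share the same branch curve $B \subset \PP^2$ of degree $d$, with $c$ cusps, $n$ nodes, and $g$ the geometric genus of $B$, yet are not isomorphic as coverings. Each $f_i$ gives rise to a monodromy representation $\rho_i\colon \pi_1(\PP^2 \setminus B) \to \sym_{N_i}$ sending every meridian to a transposition, and satisfying the commutation relations at nodes and the braid relations at cusps. The strategy is to leverage the already-established inequality range of Theorem~\ref{KulivovChisiniI} and then handle the complementary small-degree case via a fiber-product construction.

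First, I would dispose of the range where $N_i$ satisfies inequality~(\ref{in}): here Theorem~\ref{KulivovChisiniI} immediately gives uniqueness, so we may assume that both $N_1$ and $N_2$ lie below that bound, which forces $B$ to have a relatively large cuspidal part compared with the remaining invariants. Next, pass to the normalization $\widetilde{S}$ of the fiber product $S_1 \times_{\PP^2} S_2$, removing the diagonal and any obvious common components. The ramification of $\widetilde{S} \to \PP^2$ lies entirely over $B$, and the local structure at nodes and cusps of $B$ is dictated by how the two transposition-valued monodromies of $\rho_1$ and $\rho_2$ interact in $\sym_{N_1} \times \sym_{N_2}$.

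The heart of the argument is then to combine two ingredients. On the numerical side, compute $\chi(\oo_{\widetilde{S}})$ and $K_{\widetilde{S}}^2$ from $d$, $c$, $n$ and the partition of transpositions at each singular point; Miyaoka--Yau and the Bogomolov--Miyaoka--Yau inequality then bound the possible values from above, while the assumed smallness of $N_i$ bounds them from below. On the group-theoretic side, use that the images $\rho_i(\pi_1(\PP^2 \setminus B))$ are transitive subgroups of $\sym_{N_i}$ generated by transpositions (hence full symmetric groups on each orbit), and argue that the combined representation $\rho_1 \times \rho_2$ modulo the diagonal would produce an \emph{auxiliary} generic covering of lower degree than $\max(N_1,N_2)$ but with the same branch $B$, eventually violating the numerical bound unless $(S_1,f_1) \cong (S_2,f_2)$.

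The unique surviving configuration, after pushing the numerical squeeze to its limit, should match exactly the case where one of the $S_i$ is $\PP^2$ Veronese-embedded in $\PP^5$ and $f_i$ is a linear projection: here the degree is $4$, the branch is a sextic with nine cusps, and there is a classical second covering (the so-called dual covering) producing a non-isomorphic generic cover. I expect the main obstacle to be precisely this endgame: verifying that the numerical constraints admit \emph{only} the Veronese configuration as a genuine counterexample, which requires a delicate case analysis of small $N_i$ and careful bookkeeping of how transpositions in $\rho_1$ and $\rho_2$ can be simultaneously compatible with the nodal and cuspidal braid relations along $B$.
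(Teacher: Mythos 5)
This statement is not proved in the paper at all: it is Kulikov's theorem, imported verbatim from \cite{K08} as a black box (just as Theorem~\ref{KulivovChisiniI} is imported from \cite{K99}), so there is no internal argument to compare yours against. Judged on its own terms, your text is a strategy outline rather than a proof, and the two places where all the difficulty lives are exactly the places you leave blank. First, the fiber-product construction $S_1\times_{\PP^2}S_2$ together with the Bogomolov--Miyaoka--Yau input is precisely the engine that produces Theorem~\ref{KulivovChisiniI}, i.e.\ the sufficiency of the inequality~\eqref{in}; you are entitled to assume that result, but then the actual content of Theorem~\ref{kulikov08} is something else, namely that for a covering arising from a \emph{generic projection} the quantity $\frac{4(3d+g-1)}{2(3d+g-1)-c}$ can be bounded using the relations between $d$, $g$, $c$ and the invariants of $S$ (relations of the same nature as those in Lemma~\ref{lem_2caninv}), so that \eqref{in} holds automatically except in a short list of low-degree configurations. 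Your proposal never performs this bound; it only asserts that ``the assumed smallness of $N_i$'' will eventually ``violate the numerical bound,'' with no computation of $\chi(\oo_{\widetilde S})$ or $K^2_{\widetilde S}$, no control of the local contributions at nodes and cusps, and no argument that the auxiliary covering you extract from $\rho_1\times\rho_2$ is again generic (sends meridians to transpositions) and of strictly smaller degree.

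Second, the endgame you describe --- ``verifying that the numerical constraints admit only the Veronese configuration'' --- is the entire point of the theorem, and you explicitly defer it as an expected obstacle. Identifying the exception requires classifying the low-degree generic coverings compatible with the given cuspidal sextic data and exhibiting the non-uniqueness for the $9$-cuspidal sextic; none of that case analysis appears. So while your sketch points in the general direction of Kulikov's actual method, it does not constitute a proof: the reduction to the numerical criterion is unexecuted and the characterization of the exceptional case is only conjectured. For the purposes of this paper the correct move is simply to cite \cite{K99} and \cite{K08}, as the authors do.
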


We prove that the combinatorial datas of a generic m-canonical branch curve of a surface isogenous to a product $S$ depends only on the  group $G$ and a ramification structure for $S$.

\begin{lem}\label{lem_2caninv} Let $(\PP^2,B)$ be a generic 2-canonical branch curve of regular surfaces isogenous to a product $S$  then
\begin{equation}\label{eq_deg} d=\deg B=14c_1(S)^2,
\end{equation}
\begin{equation}\label{eq_cusps} c=68c_1(S)^2-c_2(S)
\end{equation}
\begin{equation}\label{eq_nodes} n=98(c_1(S)^2)^2-117c_1(S)^2+c_2(S)
\end{equation}
\begin{equation}\label{eq_eulernormalbranch} \chi_{top}(R)=-56c_1(S)^2
\end{equation}
\end{lem}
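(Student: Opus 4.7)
The plan is to reduce each of the four identities to an intersection computation on $S$ by combining four standard ingredients: the ramification formula for a generic projection, adjunction on $R$, the arithmetic genus of a plane curve, and additivity of the topological Euler characteristic over a natural stratification of $B$.

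First, by Lemma~\ref{lem_2K} the linear system $|2K_S|$ is very ample, so $S$ embeds into $\PP^{P_2-1}$ with hyperplane class $H=2K_S$; hence $p^*\mathcal{O}_{\PP^2}(1)\sim 2K_S$ and $N:=\deg p=H^2=4c_1(S)^2$. The ramification formula $K_S=p^*K_{\PP^2}+R$ combined with $K_{\PP^2}=-3\ell$ yields $R\sim K_S+3\cdot 2K_S=7K_S$. Equation~\eqref{eq_deg} is then $\deg B=R\cdot p^*\ell=7K_S\cdot 2K_S=14c_1(S)^2$, while \eqref{eq_eulernormalbranch} is the adjunction computation $2g(R)-2=R\cdot(R+K_S)=7K_S\cdot 8K_S=56c_1(S)^2$.

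To obtain the cusp formula I evaluate $\chi_{top}(S)=c_2(S)$ via the stratification
\[
\PP^2=(\PP^2\setminus B)\sqcup B^{\mathrm{sm}}\sqcup\{\text{nodes of }B\}\sqcup\{\text{cusps of }B\}.
\]
Over $\PP^2\setminus B$ the map $p$ is an $N$-sheeted topological cover. Over $B^{\mathrm{sm}}$, Definition~\ref{kulikov} gives $p^*B=2R+C$ with $p|_R$ restricting to the normalization and $p|_C$ an unramified $(N-2)$-fold cover, so each fibre contains $N-1$ topological points. Over each node and each cusp of $B$ there are set-theoretically $N-2$ preimages. Additivity of $\chi_{top}$, combined with the identity $\chi_{top}(B)=\chi_{top}(R)-n$ (nodes have two normalisation preimages while cusps are unibranch), telescopes to
\[
\chi_{top}(S)=3N-\chi_{top}(R)-c.
\]
Substituting the already-computed values of $N$ and $\chi_{top}(R)$ gives $c=12c_1(S)^2+56c_1(S)^2-c_2(S)=68c_1(S)^2-c_2(S)$, proving \eqref{eq_cusps}.

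For \eqref{eq_nodes} I use the arithmetic genus of a plane curve of degree $d$, namely $p_a(B)=\binom{d-1}{2}$, together with $g(R)=p_a(B)-n-c$ from the delta invariants ($\delta=1$ for both nodes and cusps). Substituting $d=14c_1(S)^2$ and the values of $g(R)$ and $c$ already obtained produces
\[
n=p_a(B)-g(R)-c=98c_1(S)^4-117c_1(S)^2+c_2(S).
\]
The only step requiring genuine care is the bookkeeping of the singular fibres in the Euler-characteristic argument, in particular keeping track of the fact that over $B^{\mathrm{sm}}$ the contribution is $N-1$ rather than $N$ because the simple ramification of $p|_R$ identifies two sheets; once the identity $\chi_{top}(S)=3N-\chi_{top}(R)-c$ is in place, the remaining formulas are formal consequences.
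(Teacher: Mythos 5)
Your proposal is correct and follows essentially the same route as the paper: identify $p^*\oo_{\PP^2}(1)\sim 2K_S$ and $R\sim 7K_S$ via the ramification formula, get $\deg B$ and $\chi_{top}(R)$ by intersection theory and adjunction, extract $c$ from the Euler-characteristic count over the stratification of $\PP^2$ by $B$ and its singular points, and then recover $n$ from the genus of the normalization of a nodal-cuspidal plane curve. The only cosmetic differences are that the paper computes $\deg B$ by Riemann--Hurwitz applied to $p|_{\tilde\ell}$ for a generic line $\ell$ rather than via $R\cdot p^*\ell$, and organizes the Euler-number bookkeeping as $c_2(S)+\chi_{top}(R)=\nu\,c_2(\PP^2)-c$ instead of passing through $\chi_{top}(B)=\chi_{top}(R)-n$; these are equivalent.
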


\begin{proof}
Let us denote by $\nu=4c_1(S)^2$ the degree  of $p\colon S \longrightarrow \PP^2$.
Let $R$ be the ramification locus of the generic covering $p$, which is smooth and the
normalization of $B$.
For a generic line $\ell\subset\PP^2$ we have the restriction $p|_\ell:\tilde\ell\to \ell$,
with $\tilde\ell$ in the linear system $2K_S$.

Since $p|_\ell$ is a simple covering of $\ell$ branched at $\ell\cap B$ Riemann Hurwitz gives us
\[
\chi_{top}(\tilde\ell) = \nu\chi_{top}(\ell) -\#\ell\cap B = 2 \nu -\deg B.
\]
With adjunction on $S$ we have $\chi_{top}(\tilde\ell)= -(2K_S)(2K_2+K_S)=-6c_1(S)^2$ and thus
obtain \eqref{eq_deg}.

The formula for ramification divisor of $p$ reads $K_S=-3\tilde\ell+R$ and adjunction on $S$
gives \eqref{eq_eulernormalbranch}:
\[
\chi_{top}(R) = -R(R+K_S) = -(3\tilde\ell+K_S)(3\tilde\ell+2K_S) = -7K_S(8K_S) = -56c_1(S)^2.
\]

To get an equation for the number of cusp $c$ we compute the topological Euler number of
$S$ from the map $p$ in comparison with the map $p|_R$. We note the following
\begin{enumerate}
\item
over $x\in\PP^2-B$ there are $\nu$ points of $S$ and no point of $R$,
\item
over $x\in B_{reg}$, the regular points of $B$ there are $\nu-1$ points of $S$ and one
point of $R$,
\item
over $x$ a node of $B$ there are $\nu-2$ points of $S$ and two
points of $R$,
\item
over $x$ a cusp of $B$ there are $\nu-2$ points of $S$ and only one point of $R$.
\end{enumerate}
Thus we can write the sum of the Euler numbers of $S$ and $R$ as follows
\[
c_2(S) + \chi_{top}(R) = \nu c_2(\PP^2) - c
\quad \implies \quad
c = 3\nu - \chi_{top}(R) - c_2(S) = 68 c_1(S)^2 - c_2(S).
\]
Finally we compute the Euler number along the normalization map to obtain
\[
\chi_{top}(R) = d(3-d) + 2n +2c
\quad \implies \quad
2n = \chi_{top}(R) +d^2 - 3d -2c
= 196 c_1^2 -234 c_1 - 2c_1
\]

\end{proof}

\begin{rem} The Lemma above can be easily generalized for any $m-$canonical curve.
\end{rem}

Now we can link SIPs with Zariski pairs.
\begin{theo}\label{theo_ZP} Let $S_1$ and $S_2$ be two surfaces isogenous to a product as in  Lemma \ref{lem_2K} with the same Euler number and $\pi_1(S_1) \neq \pi_1(S_2)$. Then for $m \geq 2$  the corresponding generic m-canonical branch curves $(\PP^2, B_1)$, $(\PP^2, B_2)$ are a Zariski pair.
\\
More precisely, they are distinguished by the fact that no isomorphism 
$\pi_1(\PP^2\setminus B_1)\to \pi_1(\PP^2\setminus B_2)$ exists, which maps the meridian class for $B_1$ to that for $B_2$.
\end{theo}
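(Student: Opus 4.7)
The plan is to verify the two defining conditions of a Zariski pair while simultaneously establishing the stronger meridian-preserving statement. Condition~(1), equality of combinatorial data, follows at once from Lemma~\ref{lem_2caninv} (and the remark that it extends without change to every $m\geq 2$): the degree of $B_i$ and its numbers of cusps and nodes depend only on $c_1(S_i)^2$ and $c_2(S_i)$. By Proposition~\ref{isoinv}, for a SIP both $K_{S_i}^2=8\chi(S_i)$ and $e(S_i)=4\chi(S_i)$ are determined by $\chi(S_i)$, so the assumption that $S_1$ and $S_2$ share the same Euler number forces $B_1$ and $B_2$ to share the same degree, cusp count and node count.

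For the stronger version of~(2) I would argue by contradiction. Suppose
\[
\phi\colon \pi_1(\PP^2\setminus B_1)\longrightarrow \pi_1(\PP^2\setminus B_2)
\]
is an isomorphism sending the conjugacy class of meridians for $B_1$ to that for $B_2$. The unramified restriction of $p_i\colon S_i\to\PP^2$ over $\PP^2\setminus B_i$ corresponds to a subgroup $H_i\leq\pi_1(\PP^2\setminus B_i)$ of index $N=\deg p_i=4c_1(S_i)^2$, and by Theorem~\ref{cilibertoflamini} the associated monodromy representation into $\mathfrak{S}_N$ sends each meridian to a transposition. The image $\phi(H_1)\leq\pi_1(\PP^2\setminus B_2)$ has the same index $N$, and since $\phi$ preserves meridian conjugacy classes, its coset monodromy still sends meridians to transpositions. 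By Grauert--Remmert (Riemann existence for algebraic branched coverings of normal projective varieties) this topological datum is realized by a unique finite algebraic covering $q\colon \tilde S\to\PP^2$ of degree $N$ with branch locus exactly $B_2$; the transposition condition on meridians together with the equation $q^*B_2=2R+C$ coming from simple ramification identifies $q$ as a generic covering in the sense of Definition~\ref{kulikov}.

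Now Kulikov's solution of Chisini's conjecture (Theorem~\ref{kulikov08}) applies: both $q$ and $p_2$ are generic coverings of $\PP^2$ with the same branch curve $B_2$, and the Veronese exception is excluded because $S_2$, and hence also $\tilde S$, is of general type. Therefore $\tilde S\cong S_2$ as algebraic surfaces, and in particular $\pi_1(\tilde S)\cong\pi_1(S_2)$. On the other hand, by the very construction of $q$ from the monodromy data of $p_1$ transported along $\phi$, the topological covering underlying $q$ is equivalent to the one underlying $p_1$, so $\tilde S$ is homeomorphic to $S_1$ and hence $\pi_1(\tilde S)\cong\pi_1(S_1)$. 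Combining the two isomorphisms gives $\pi_1(S_1)\cong\pi_1(S_2)$, contradicting the hypothesis. The delicate point of the argument is precisely the step that turns the group-theoretic information carried by $\phi(H_1)$ into a \emph{generic} algebraic covering branched along $B_2$, so that Kulikov's theorem is applicable; this is exactly where the meridian-preserving assumption on $\phi$ is indispensable, as it is what guarantees the transposition form of the local monodromy.
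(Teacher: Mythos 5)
Your overall strategy coincides with the paper's: transport the monodromy of one covering along the hypothetical isomorphism $\phi$, compactify via Grauert--Remmert to a generic covering, and invoke Kulikov's solution of the Chisini conjecture to identify the resulting surface. (The paper transports $\mu_2$ back along $\varphi$ and applies Chisini over $B_1$; you push $H_1$ forward and apply Chisini over $B_2$ --- a mirror image of the same idea.) The verification of equal combinatorial data via Lemma \ref{lem_2caninv} is also as in the paper.

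However, there is a genuine gap in your final step. You claim that ``the topological covering underlying $q$ is equivalent to the one underlying $p_1$, so $\tilde S$ is homeomorphic to $S_1$.'' This does not follow: $q$ restricted to the complement is a covering of $\PP^2\setminus B_2$ classified by $\phi(H_1)$, while $p_1$ restricted to the complement is a covering of $\PP^2\setminus B_1$ classified by $H_1$; these are coverings of \emph{different} spaces related only by the abstract group isomorphism $\phi$, which is not assumed to be induced by any homeomorphism of pairs. What you actually obtain is an isomorphism $H_1\cong\phi(H_1)$, i.e.\ an isomorphism $\pi_1(S_1\setminus p_1^{-1}(B_1))\cong\pi_1(\tilde S\setminus q^{-1}(B_2))$ of the fundamental groups of the \emph{open} parts. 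To conclude $\pi_1(S_1)\cong\pi_1(\tilde S)$ you must check that this isomorphism carries the normal closure of the meridians of $p_1^{-1}(B_1)=2R_1+C_1$ onto the normal closure of the meridians of $q^{-1}(B_2)$, since each $\pi_1$ of the compact surface is the quotient of the $\pi_1$ of the open part by exactly that normal subgroup. This is precisely where the paper spends most of its effort (diagram \eqref{alpha} and the subsequent analysis of $m_{R_i}$ and $m_{C_i,k}$ via their images in $\sym_\nu/\sym_{\nu-1}$): one uses that a meridian of the ramification divisor $R_i$ pushes down to the \emph{square} of a meridian of $B_i$ while a meridian of $C_i$ pushes down to a meridian of $B_i$, and the meridian-preservation hypothesis on $\phi$ is invoked a second time here to match the two kinds of meridians. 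A secondary, more minor omission: you should justify smoothness of the compactified covering $\tilde S$ over the nodes and cusps of $B_2$ (the paper cites the local analysis in Catanese's work on the Chisini problem), since Definition \ref{kulikov} and Kulikov's theorem require $\tilde S$ smooth.
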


\begin{proof} 
We know that for $m\geq 2$ there exists pluricanonical embeddings $\iota_{m,i} \colon S_i \longrightarrow \PP^{P_m-1}$ for $i=1,2$. Then the generic linear projection of the pluricanonical image  to $\PP^2$ yields a generic  covering $p_i\colon S_i\longrightarrow \PP^2$. These are branched cover with branch $B_i$. 
By Lemma \ref{lem_2caninv} we see that the corresponding generic m-canonical branch curves 
$(\PP^2, B_1)$, $(\PP^2, B_2)$ have the same combinatorial data. 

Let us assume to the contrary that  $(\PP^2, B_1)$ $(\PP^2, B_2)$ are not a Zariski pair. This implies, by the definition of Zariski pairs, that 

\begin{enumerate}
\item
there exists an isomorphism 
\[ \varphi\colon\pi_1(\PP^2\setminus B_1) \longrightarrow \pi_1(\PP^2\setminus B_2),
\]
\item
which maps the conjugacy class of a meridian for $B_1$ to that of a meridian for $B_2$.
\end{enumerate}
Under these assumptions we shall prove that $\pi_1(S_1) = \pi_1(S_2)$.

The topological unramified covers $S_i \setminus  p^{-1}_i(B_i) \longrightarrow \PP^2 \setminus B_i$ 
correspond to monodromy homomorphism onto the symmetric group $\sym_\nu$ up to
conjugation, where $\nu$ is the degree of $p_1,p_2$.
We choose two representatives, ie.\ group epimorphisms 
\[ \mu_i\colon \pi_1(\PP^2\setminus B_i, x_i) \longrightarrow \sym_\nu.
\]

The composition $\mu_2 \circ \varphi$ is an epimorphism  $\pi_1(\PP^2\setminus B_1, x_1) \longrightarrow \sym_{\nu}$,
which maps the class of meridians to the class of transpositions.
This monodromy morphism yields a topological covering of $\PP^2 \setminus B_1$ that by Riemann Existence Theorem
\cite{GrRe58} can be compactified  to a covering $S'_1 \longrightarrow \PP^2$
which is smooth due to the local analysis in \cite[\S1]{Cat86}.  
By the positive answer to the Chisini's conjecture by Kulikov, Thm.\ref{kulikov08}, there exists an isomorphism $f$ such that the following diagram commuts
 \begin{equation}\label{dia_chisini}
\xymatrix{
S_1 \ar[rr]^{f} \ar[dd]_{p_1} & & S'_1 \ar[dd]^{p'_1}  \\
\\
  \PP^2 \ar[rr]^{id}  & & \PP^2.
}
\end{equation}

This yields the following  commutative diagram 

\void{
\begin{equation}\label{beta}
\xymatrix{
&  \pi_1(\PP^2\setminus B_1, x_1) \ar[rrr]^{\mu_2 \circ \varphi} \ar[ddd]_{\varphi} & & & \sym_{\nu} \ar[ddd]^{=}\\
\pi_1(X'_1\setminus (p'_1)^{-1}(B_1), f(\tilde{x_1})) \ar@{^{(}->}[ur]^{j'_1} \ar[rrr]^{\mu_2 \circ \varphi}  \ar@{-->}[d]_{\tilde{\varphi}} &  &  & \sym_{\nu-1} \ar[ur] \ar[d]^{=}\\
\pi_1(X_2\setminus p_2^{-1}(B_2), \tilde{x_2})  \ar@{^{(}->}[dr]_{j_2}  \ar[rrr]^{\mu_2} &  &  & \sym_{\nu-1} \ar[dr] \\
&  \pi_1(\PP^2\setminus B_2, x_2) \ar[rrr]^{\mu_2} &  & & \sym_{\nu} \\
}
\end{equation}
}

\begin{equation}\label{big}
\xymatrix{
\pi_1(S'_1\setminus (p'_1)^{-1}(B_1), f(\tilde{x_1})) \ar[rrrr]^{\mu_2 \circ \varphi} \ar@{_{(}->}[dr]_{j'_1}
\ar@{-->}[ddd]_{\tilde{\varphi}} &  
& 
& 
& \sym_{\nu-1} \ar[dl] \ar[ddd]^{=} \\
&  \pi_1(\PP^2\setminus B_1, x_1) \ar[rr]^{\mu_2 \circ \varphi} \ar[d]_{\varphi} 
& 
& \sym_\nu \ar[d]^{=}
& \\
& \pi_1(\PP^2\setminus B_2, x_2) \ar[rr]^{\mu_2} 
&  
& \sym_{\nu} \\
\pi_1(S_2\setminus p_2^{-1}(B_2), \tilde{x_2})  \ar@{^{(}->}[ur]^{j_2}  \ar[rrrr]^{\mu_2} 
&  
&  
&  
& \sym_{\nu-1} \ar[ul] \\
}
\end{equation}

We define $\tilde{\varphi}=\varphi \circ j'_1$ having image in  $\pi_1(\PP^2\setminus B_2, x_2)$. By commutativity $\mu_2 \circ \tilde{\varphi}=(\mu_2 \circ \varphi) \circ j'_1$ and has image in $\sym_{\nu-1}$. Therefore, $\tilde{\varphi}$ actually maps to $(\mu_2)^{-1}(\sym_{\nu-1})$ which is $\pi_1(X_2\setminus p^{-1}_1(B_2), \tilde{x_2}) $. 
Arguing in the same way with 
$\varphi^{-1}\circ j_2$ we see that $\tilde{\varphi}$ does in fact give the isomorphism with
the dashed arrow in \eqref{big}.

Let us write $(p'_1)^{-1}(B_1)=R_1+C_1$ and $p_2^{-1}(B_2)=R_2+C_2$, where $R_i$ are the ramification divisors. Let $D_i$ be a small transversal disc to $B_i$ for $i=1,2$, its pre-image is a union of discs $D_{R_i}$ and $D_{C_{i},1}, \ldots , D_{C_{i},\nu-2}$ transversal respectively to $R_i$ and $C_i$ for $i=1,2$. Let $m_{R_i}$, $m_{C_i, k}$ denote any corresponding meridians. We get the following diagram with exact rows.

\begin{equation}\label{alpha}
\xymatrix{
1 \ar[r] & \langle \langle  m_{R_1}, m_{C_1, k} \rangle \rangle_{k=1,\ldots \nu-2} \ar[r]  \ar@{-->}[dd]^{\tilde{\varphi}|} &  \pi_1(S'_1 \setminus (p'_1)^{-1}(B_1), f(\tilde{x_1})) \ar[r]  \ar[dd]^{\tilde{\varphi}}  & \pi_1(S'_1, f(\tilde{x_1}))  \ar[r]  \ar@{-->}[dd]^{\bar{\varphi}}  & 1 \\
\\ 
1 \ar[r] & \langle \langle  m_{R_2}, m_{C_2, k} \rangle \rangle_{k=1,\ldots \nu-2}  \ar[r]  &  \pi_1(S_2 \setminus p_2^{-1}(B_2), \tilde{x_2}) \ar[r]   & \pi_1(S_2, \tilde{x_1}) \ar[r] & 1
}
\end{equation}
We infer that this diagram yields a commutative diagram with the dashed arrows isomorphisms
by the following argument:\\
Notice that $(p'_1)_*(m_{R_1})$ is conjugate to the square of $m_{B_1}$, a meridian corresponding to the disc $D_1$. For the same reason the square of $m_{B_2}$ is conjugate to $(p_2)_*(m_{R_2})$. By assumption $\varphi(m_{B_1})$ is conjugate to $m_{B_2}$, a meridian corresponding to the disc $D_2$ in the group $\pi_1(\PP^2\setminus B_2, x_2)$. 
Let $n_1$ be the element conjugate to $m_{B_2}$ such that $\varphi((p'_1)_*(m_{R_1}))=n^2_1$ and $n_2$ be the element conjugate to $m_{B_2}$ such that $(p_2)_*(m_{R_2})=n^2_2$. 
We want to prove that $\tilde{\varphi}(m_{R_1})$ is conjugate to $m_{R_2}$ in  $\pi_1(S_2 \setminus p_2^{-1}(B_2), \tilde{x_2})$. We have the following diagram of groups and morphisms.
\[
\xymatrix{
 \langle \langle  m_{R_i}, m_{C_i, k} \rangle \rangle_{k=1,\ldots \nu-2} \ar[r] &  \pi_1(S'_1 \setminus (p'_1)^{-1}(B_1), f(\tilde{x_1})) \ar[r]^{\qquad\: p_1'}  \ar[dd]^{\tilde{\varphi}} &  \pi_1(\PP^2\setminus B_1, x_1) \ar[r]   \ar[dd]^{\varphi}&\sym_{\nu}/\sym_{\nu-1}\\ \\
\langle \langle  m_{R_i}, m_{C_i, k} \rangle \rangle_{k=1,\ldots \nu-2} \ar[r] &  \pi_1(S_2 \setminus p_2^{-1}(B_2), \tilde{x_2}) \ar[r]^{\quad p_2}  &  \pi_1(\PP^2\setminus B_2, x_2) \ar[r]^{\quad\eta} & \sym_{\nu}/\sym_{\nu-1}
}
\]
For $i=1,2$, the image of $n_i$ in $\sym_{\nu}/\sym_{\nu-1}$ is non trivial, hence it is a transposition involving $\nu$. We can suppose, up to conjugation in  the pre-image of $\sym_{\nu-1}$, that this transposition is $(\nu-1,\nu)$ for both $n_1$ and $n_2$.  Let $\alpha$ a conjugation element such that $n_1^{\alpha}=n_2$, then $\eta(\alpha) \in \langle \sym_{\nu-2},(\nu-1,\nu)\rangle$, if we consider $n_1 \alpha$ instead of $\alpha$ we see that we can choose $\alpha$ such that $\eta(\alpha) \in \sym_{\nu-2}$. 

This means that  $\tilde{\varphi}(m_{R_1})$ is conjugate to $m_{R_2}$ in  $\pi_1(S_2 \setminus p_2^{-1}(B_2), \tilde{x_2})$. Analogously one can prove that  $\tilde{\varphi}(m_{C_1,k})$ is conjugate to $m_{C_2,k'}$ in  $\pi_1(S_2 \setminus p_2^{-1}(B_2), \tilde{x_2})$. 
Thus we get the map $\tilde{\varphi}|$, and arguing the same way with $\tilde{\varphi}^{-1}$ we see that  $\tilde{\varphi}|$ is an isomorphism.
This proves that $\bar{\varphi}$ is a well defined isomorphism. But then we arrive at a contradiction to the hypothesis that $\pi_1(S_1) \neq \pi_1(S_2)$. Therefore,  $(\PP^2, B_1)$, $(\PP^2, B_2)$ are  a Zariski pair. 
\end{proof}

\begin{rem} The Theorem \ref{theo_ZP} can be generalized to any pair surfaces isogenous to a product with the same Euler number and $\pi_1(S_1) \neq \pi_1(S_2)$ provided  $mK_S$ is very ample.
\end{rem}


\section{Zariski multiplets}

In this section we study Zariski multiples arising as branch curves of coverings of $\PP^2$ by SIPs. Moreover,  the knowledge of the number of connected components of the moduli space of SIPs with given invariants will enable us to produce Zariski  multiples whose number grows sub-exponentially with respekt ot the degree.  





Recall that (see \cite{Gie77}) once two positive integers $x,y$ are fixed there exists a quasi-projective coarse moduli space $\mathcal{M}_{y,x}$ of canonical models of surfaces of general type with 
{$x=\chi(S)=\chi(\mathcal{O}_S)$} and $y=K^2_S$. The number $\iota(x,y)$, resp.\ $\gamma(x,y)$, of irreducible, resp.\ connected, components of  $\mathcal{M}_{y,x}$ is bounded from above by a function of $y$ and we shall also consider the number $\iota^0(x,y)$ of components containing regular surfaces, i.e., \ $q(S)=0$. In \cite{LP16} we proved the following fact. 

\begin{theo}[cf. Theorem 1.1 \cite{LP16}]\label{theo_SIPs}\sloppy Let $h=h_{k,l}$ be number of connected components of the moduli space of surfaces of general type which contain regular surfaces isogenous to a product 
of holomorphic Euler number $x_{k,l}= 2^{l-3}(k^2+k-4)$
given by a $(\ZZ/2\ZZ)^k$ action with ramification structure of type $(2^{k(k+1)},2^{4+2^{l-k+1} })$.

If $k,l$ are positive integers with $l>2k$, then 
\begin{equation}
\label{eq_compSIPs}
h\quad \geq \quad 2_{\displaystyle\phantom{A}}^{\sqrt[\uproot{2}2+\varepsilon]{x_k}}\qquad \text{for} \quad k \rightarrow \infty,
\end{equation}
where $\varepsilon$ is the positive real number such that $l=(2+\varepsilon)k$.
\end{theo}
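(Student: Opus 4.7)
The plan is to exhibit many inequivalent spherical ramification structures of the prescribed type on $G = (\ZZ/2\ZZ)^k$ and then invoke Remark~\ref{minimal} together with the Weak Rigidity Theorem~\ref{weak}: each equivalence class produces a distinct irreducible component of the moduli space of surfaces of general type with the given invariants, and since Theorem~\ref{weak} gives at most a two-to-one complex-conjugation ambiguity, a lower bound on the count of classes yields a lower bound on $h_{k,l}$ up to a harmless factor of $2$. The crucial simplification is that because $G$ is abelian of exponent $2$, every non-identity element is an involution and conjugation is trivial; hence the Hurwitz action on an $r_i$-tuple reduces to the permutation action of $\mathfrak{S}_{r_i}$, a Hurwitz-equivalence class of spherical systems is just a multiset of elements of $G \setminus \{0\}$ with sum zero whose support generates $G$, and the disjointness condition $\Sigma(T_1) \cap \Sigma(T_2) = \{0\}$ reduces to the disjointness of the supports.

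I would next fix a generating set $A \subset G \setminus \{0\}$ of size $k$ together with any one admissible multiset $T_1$ of size $r_1 = k(k+1)$ supported in $A$ (readily constructed since $r_1 - 2k$ is a non-negative even integer, so one can assign even multiplicities to $k-1$ elements of $A$ and adjust the remaining entries to enforce sum zero and generation). Set $B := (G \setminus \{0\}) \setminus A$ and $N := |B| = 2^k - 1 - k$. For every multiset $T_2$ of size $r_2 = 4 + 2^{l-k+1}$ supported in $B$ with sum zero and generating support, the pair $(T_1,T_2)$ is an admissible ramification structure, and disjointness of supports is automatic. The total number of size-$r_2$ multisets supported in $B$ is $\binom{N+r_2-1}{N-1}$. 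Requiring sum zero costs at most a factor $|G| = 2^k$ in the count, excluding those whose support fails to generate $G$ is negligible (the dominant bad sets are the at most $2^k - 1$ hyperplanes $H$, for which $|H \cap B|$ is at most $\tfrac{N+k}{2}$ so the corresponding multiset count is exponentially smaller in $N$), and quotienting by $\Aut(G) = \GL_k(\mathbb{F}_2)$ costs at most a factor $|\Aut(G)| \le 2^{k^2}$. Using $\binom{N+r-1}{N-1} \ge (r/(N-1))^{N-1}$ for $r \ge N$,
\begin{equation*}
\log_2 h_{k,l} \;\geq\; (N-1)\log_2\bigl(r_2/(N-1)\bigr) \,-\, k \,-\, k^2 \,-\, O(k).
\end{equation*}

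Substituting $l = (2+\varepsilon)k$, so that $r_2 \sim 2^{(1+\varepsilon)k+1}$, $N \sim 2^k$, and $r_2/N \sim 2^{\varepsilon k + 1}$, the leading term gives
\begin{equation*}
\log_2 h_{k,l} \;\geq\; (2^k - k - 1)(\varepsilon k + 1) - O(k^2) \;\sim\; \varepsilon\, k\, 2^k.
\end{equation*}
Meanwhile $x_{k,l}^{1/(2+\varepsilon)} = \bigl(2^{l-3}(k^2+k-4)\bigr)^{1/(2+\varepsilon)} \sim 2^{k}\, k^{2/(2+\varepsilon)}$, and since $2/(2+\varepsilon) < 1$ the quotient $(\varepsilon k)/k^{2/(2+\varepsilon)}$ diverges as $k \to \infty$, so the lower bound $\varepsilon k \cdot 2^k$ dominates $2^k \cdot k^{2/(2+\varepsilon)}$ and the claimed inequality~\eqref{eq_compSIPs} follows.

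The main obstacle is a careful accounting that the crude multiplicative losses in the middle paragraph — the sum-zero correction, the generation correction, the $\Aut(G)$-quotient, the complex-conjugation factor of $2$, and the shrinking of $N$ by $k$ forced by the disjointness trick — really contribute only an $O(k^2)$ additive error to $\log_2 h_{k,l}$, comfortably absorbed by the leading $\varepsilon k \cdot 2^k$. One must also verify the Riemann--Hurwitz inequality~\eqref{eq.Rim.Hur.Condition} for both types, but this is immediate from $|G| = 2^k$ together with $r_1 = k(k+1)$ and $r_2 = 4 + 2^{l-k+1}$, both of which dwarf the lower bound demanded by $l > 2k$.
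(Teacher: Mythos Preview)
The paper does not supply its own proof of this statement: Theorem~\ref{theo_SIPs} is quoted verbatim from \cite{LP16} (``cf.\ Theorem 1.1''), so there is no in-paper argument to compare against. What can be said is that your reconstruction follows the natural line one expects in \cite{LP16}: reduce Hurwitz equivalence on $(\ZZ/2\ZZ)^k$ to the symmetric-group action (hence to multisets), fix a first system on a basis $A$, and count admissible second systems supported on the complement $B$, then divide by $|\Aut(G)|\le 2^{k^2}$ and the conjugation factor $2$.

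Your asymptotics are sound. With $l=(2+\varepsilon)k$ one has $N=|B|=2^k-k-1$, $r_2\sim 2^{(1+\varepsilon)k+1}$, and the multiset count $\binom{N+r_2-1}{N-1}\ge (r_2/(N-1))^{N-1}$ gives $\log_2 h_{k,l}\gtrsim \varepsilon k\cdot 2^k$, which indeed dominates $x_{k,l}^{1/(2+\varepsilon)}\sim 2^k k^{2/(2+\varepsilon)}$ since $2/(2+\varepsilon)<1$. The losses from $\Aut(G)$, conjugation, and the support-in-a-hyperplane correction are all absorbed as you claim: the union over the $2^k-1$ hyperplanes contributes at most $2^k\cdot\binom{2^{k-1}-2+r_2}{2^{k-1}-2}$, whose logarithm is only about half the main term, so the subtraction is negligible.

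Two places deserve a tighter sentence. First, the sum-zero constraint: your ``costs at most a factor $|G|$'' is heuristically right but not literally justified for multisets. A clean fix is to restrict to multisets in which every element of $B$ occurs with even multiplicity; the sum is then automatically zero, the count becomes $\binom{N+r_2/2-1}{N-1}$, and the same asymptotic $\varepsilon k\cdot 2^k$ survives. Second, you should say explicitly why inequivalent ramification structures give \emph{distinct} connected components: this is Remark~\ref{minimal} (uniqueness of the minimal realization forces inequivalent data to yield non-deformation-equivalent surfaces) together with Theorem~\ref{weak} (same Euler number and same $\pi_1$ forces the same component up to conjugation), which is exactly the factor of $2$ you already budgeted.
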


Using the theorem above it is not hard to prove the main theorem.

\begin{theo}\label{thm main2}
There is a $N_d$-multiplet of irreducible plane curves of degree $d$
with 
\[
n_d= \frac12d^2 - \frac{233}{28}d \mbox{ nodes},\quad
c_d= \frac{135}{28} d \mbox{ cusps}
\]
for any $\varepsilon>0$ and infinitely many $d$ such that
\[
\log_2 N_d \quad \geq\quad (\mbox{$\frac8{14}$}d)^{\frac{1}{2+\varepsilon}}. 
\]
\end{theo}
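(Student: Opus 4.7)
The plan is to bundle together all the machinery developed in the previous sections. First, I would invoke Theorem \ref{theo_SIPs} for an auxiliary parameter $\varepsilon' < \varepsilon$, choosing $l = (2+\varepsilon')k$ with $k$ large. This produces $h = h_{k,l}$ connected components of the moduli space of surfaces of general type, each containing a regular SIP of holomorphic Euler characteristic $\chi = x_{k,l}$ built from a $(\ZZ/2\ZZ)^k$-action with ramification structure of type $(2^{k(k+1)},2^{4+2^{l-k+1}})$. I would pick a representative $S_1,\dots,S_h$, one from each such component.

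Next, I would use Lemma \ref{lem_2K} (applicable for $k$ large) to guarantee that $2K_{S_j}$ is very ample, so every $S_j$ admits a bicanonical embedding into the same projective space $\PP^{P_2-1}$. Choosing a sufficiently general projection center and applying Theorem \ref{cilibertoflamini}, I obtain for each $j$ a generic $2$-canonical projection $p_j\colon S_j\to\PP^2$ whose branch curve $B_j\subset\PP^2$ is irreducible with only nodes and cusps. Since $K_{S_j}^2=8\chi$ and $c_2(S_j)=4\chi$ are the same for all $j$, Lemma \ref{lem_2caninv} shows that the $B_j$ share the same degree $d = 14\,K_{S_j}^2 = 112\chi$, and a direct substitution gives $c_d=\tfrac{135}{28}d$ and $n_d=\tfrac{1}{2}d^2-\tfrac{233}{28}d$, exactly as required.

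The third step is to show that $\{B_j\}_{j=1}^h$ is pairwise a Zariski pair. By Lemma \ref{conj}, each of the $h$ components is pinned down by the pair $(\pi_1(S_j),e(S_j))$, and since $e(S_j)=4\chi$ is constant across $j$, the fundamental groups $\pi_1(S_j)$ are pairwise distinct. Theorem \ref{theo_ZP} then applies to every pair $(S_{j_1},S_{j_2})$ with $j_1\neq j_2$, producing topologically inequivalent pairs $(\PP^2,B_{j_1})$ and $(\PP^2,B_{j_2})$; hence $B_1,\dots,B_h$ form an $h$-multiplet with the prescribed combinatorial data, so $N_d\geq h$.

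Finally, I would translate the lower bound into the stated form. Since $d=112\chi$, Theorem \ref{theo_SIPs} yields
\[
\log_2 N_d \;\geq\; \log_2 h \;\geq\; \chi^{1/(2+\varepsilon')} \;=\; (d/112)^{1/(2+\varepsilon')}.
\]
It remains to choose $\varepsilon'<\varepsilon$ small enough so that for the infinite sequence of admissible $d$ one has $(d/112)^{1/(2+\varepsilon')}\geq(\tfrac{8}{14}d)^{1/(2+\varepsilon)}$; this is straightforward because $\chi^{1/(2+\varepsilon')-1/(2+\varepsilon)}\to\infty$ as $k\to\infty$, which swallows the constant factor $(8\cdot 112/14)^{1/(2+\varepsilon)}$. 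The only genuinely delicate point in the whole argument is ensuring that each $B_j$ is a bona fide generic $2$-canonical branch curve (rather than merely an equisingular deformation of one), so that Theorem \ref{theo_ZP} and Kulikov's theorem can be invoked; this is precisely what Lemma \ref{lem_2K} secures, and everything else is bookkeeping among the invariants $(k,l,\chi,d)$.
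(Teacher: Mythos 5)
Your proposal is correct and follows essentially the same route as the paper's own (much terser) proof: one representative surface per component from Theorem \ref{theo_SIPs}, bicanonical projections via Lemma \ref{lem_2K} and Theorem \ref{cilibertoflamini}, identical combinatorial data from Lemma \ref{lem_2caninv}, pairwise distinct fundamental groups from Lemma \ref{conj}, and pairwise Zariski pairs from Theorem \ref{theo_ZP}. Your additional care with the auxiliary $\varepsilon'<\varepsilon$ to absorb the constant $\bigl(\tfrac{8}{14}\cdot 112\bigr)^{1/(2+\varepsilon)}=64^{1/(2+\varepsilon)}$ arising from $d=112\chi$ is a welcome refinement of a step the paper leaves implicit.
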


\begin{proof} We have seen that to each SIP with holomorphic Euler number $x_{k,l}= 2^{l-3}(k^2+k-4)$
given by a $(\ZZ/2\ZZ)^k$ action with ramification structure of type $(2^{k(k+1)},2^{4+2^{l-k+1} })$ there corresponds a deformation class of $2$-canonical branch curves. 

Moreover the number $h_{k,l}$ of connected components of moduli 
of these surfaces is in bijection with the isomorphism
classes of their fundamental groups, by weak rigidity \ref{weak} and Lemma \ref{conj}.
Therefore by Theorems \ref{theo_ZP} we get a Zariski multiplet of cardinality $h_{k,l}$.
By Theorem \ref{theo_SIPs}, in particular by \eqref{eq_compSIPs}, 
for given $\varepsilon>0$ and 
we can estimate this number in terms
of $8x=c_1(S)^2$. 
Finally we can replace $x$ by the degree $d$ of the branch curve, since $d= 14c_1^2(S)$, see Lemma \ref{lem_2caninv}. 
\end{proof}

As a final remark, we want to mention that Catanese \cite{Cat92} proves that 
the number $\iota^0(y,x)$ of components containing regular surfaces, ie.\ $q(S)=0$, has
an exponential upper bound in $K^2$, more precisely \cite[p.592]{Cat92} gives the following inequality
\[
\iota^0(x,y) \leq y^{77y^2}. 
\]
Though this can be used to give an upper bound on equisingular deformation classes of
generic $2$-canonical branch curves, there may be large Zariski multiplets containing
irreducible curves, which are not generic $2$-canonical.


\end{document}